\newtheorem{theorem}{Theorem}[section]
\newtheorem{prop}[theorem]{Proposition}
\newtheorem{conj}[theorem]{Conjeture}
\def\QED{\ensuremath{{\square}}}
\begin{document}

%\tableofcontents
%\newpage

\title{$(n,m)$-Fold Covers of Spheres}

\author{Imre~B\'ar\'any \thanks{Alfr\'ed R\'enyi Mathematical Institute,
Hungarian Academy of Sciences,
Budapest, Hungary
and
Department of Mathematics,
University College London.
Partially supported by ERC Advanced Research Grant no 267165 (DISCONV), 
and by Hungarian National Research Grant K 83767.
{\tt barany.imre@renyi.mta.hu}}
\and Ruy~Fabila-Monroy \thanks{Departamento de Matem\'{a}ticas,
        Cinvestav, D.F. M\'exico, M\'{e}xico. Partially supported by grant 153984 (CONACyT, Mexico).
        {\tt ruyfabila@math.cinvestav.edu.mx} }
\and Birgit Vogtenhuber \thanks{Institute for Software Technology,
        University of Technology, Graz, Austria.
        {\tt bvogt@ist.tugraz.at}}}

\maketitle

\begin{abstract}
A well known consequence of the Borsuk-Ulam theorem is that if the $d$-dimensional
sphere $S^d$ is covered with less than $d+2$ open sets, then there is a set containing
a pair of antipodal points. In this paper we provide lower and upper bounds
on the minimum number of open sets, not containing a pair of antipodal points, needed to cover 
the $d$-dimensional sphere $n$ times, 
with the additional property that the northern hemisphere is covered $m > n$ times. 
We prove that if the open northern hemisphere is to be covered $m$ times then at least
$\left \lceil \frac{d-1}{2} \right \rceil+n+m$ and at most $d+n+m$ sets
are needed. For the case of $n=1$ and $d \ge 2$, this number is equal
to $d+2$ if  $m \le \left \lfloor \frac{d}{2}\right \rfloor + 1$ and 
equal to $\left \lfloor \frac{d-1}{2} \right \rfloor + 2 +m$  if $m > \left \lfloor \frac{d}{2}\right \rfloor + 1$.
If the closed northern hemisphere is to be covered $m$ times then 
$d+2m-1$ sets are needed, this number is also sufficient. 
We also present results on a 
related problem of independent interest. We prove
that if $S^d$ is covered $n$ times with open sets, not containing a pair
of antipodal points, then there exists a point that is covered at least  
$\left \lceil \frac{d}{2}\right \rceil +n$ times. Furthermore,  we show that 
there are covers in which no point is covered more than $n+d$ times.

\end{abstract}
\section{Introduction}

The Lusternik-Schnirelmann~\cite{LS} version of the Borsuk-Ulam theorem states that in any covering
of the $d$-dimensional sphere $S^d$ with at most $d+1$ open sets, there
is a set containing a pair of antipodal points. A natural generalization of this result has been introduced
by Stahl~\cite{stahl}; he considered \emph{$n$-fold covers}, in which every point of $S^d$ must 
be covered at least $n$ times. He showed that in every $n$-fold cover of $S^d$ with at most 
$d+2n-1$ open sets, there is a set containing a pair of antipodal points. Using a construction
of Gale~\cite{gale}, it can easily be shown that this bound is tight. For every $n \ge 1$, Gale
constructed a set of $d+2n$ points on the $d$-dimensional unit sphere,
 with the property that every open half-space that contains the origin contains at least $n$ points of the set. Placing an open hemisphere with its
pole at each of these points provides an $n$-fold cover of $S^d$ with
$d+2n$ open sets, in which no set contains a pair of antipodal points. We refer to this cover
as the \emph{Gale $n$-fold cover} of $S^d$. An $n$-fold cover of $S^d$ is said to
be \emph{antipodal} if none of its sets contains a pair of antipodal points.

We consider a variation on this theme. Let $m > n \ge 1$. An \emph{$(n,m)$-fold} cover of $S^d$
is an $n$-fold cover of $S^d$, in which every point of the \emph{open} northern hemisphere
is covered $m$ times. An \emph{$\overline{(n,m)}$-fold} cover of $S^d$
is an $n$-fold cover of $S^d$, in which every point of the \emph{closed} northern hemisphere
is covered $m$ times. Let $f(d,n,m)$ be the minimum number of sets in an antipodal 
$(n,m)$-fold cover of $S^d$ with open sets. In a similar way, let $\overline{f}(d,n,m)$ be 
the minimum number of sets in an antipodal $\overline{(n,m)}$-fold cover of $S^d$ with open sets. 
Since the case of $d=0$ is trivial, we will always assume that $d\ge 1$.

In this paper we show lower and upper bounds on $f(d,n,m)$ (Theorem~\ref{thm:f}) and provide
the exact value of $\overline{f}(d,n,m)$ (Theorem~\ref{thm:f_c}). 
We also compute the exact value of $f(d,1,m)$ (Theorem~\ref{thm:f_1_m} and Proposition~\ref{prop:f_1_1_2}).
The search for a lower bound of $f(d,n,m)$ lead us to study the problem of finding
a point covered many times in an antipodal $n$-fold cover of $S^d$ with open sets.
Let then $Q(d,n)$ be the maximum integer such that in every antipodal $n$-fold cover of $S^d$ with open sets there exists a 
point that is covered $Q(d,n)$ times. This paper is organized as follows. In Section~\ref{sec:Q} we show
upper and lower bounds on $Q(d,n)$ (Theorem~\ref{thm:Q}). In Section~\ref{sec:f} we give
our results on $f(d,n,m)$ and $\overline{f}(d,n,m)$.

\section{Bounds on $Q(d,n)$}\label{sec:Q}
 
The problem of determining $Q(d,1)$ has been studied before. Its exact value
of $Q(d,1)=\left \lfloor \frac{d}{2} \right \rfloor+2$
has been settled in a series of papers by \v{S}\v{c}epin~\cite{scepin}, 
Izydorek and Jaworowski~\cite{ant}, and Jaworowski~\cite{periodic}. 
An explicit cover yielding the upper bound for $Q(d,1)$ was given by Simonyi and Tardos~\cite{circular}.
They started by covering $S^d$ with the projections from the origin
of the closed facets of a regular $(d+1)$-simplex. Afterwards, they
replaced the points of these sets that were covered more than 
$\left \lfloor \frac{d}{2} \right \rfloor +1$ times with a new closed set.
(Although this gives a cover with closed sets, sufficiently small open
neighborhoods of these sets give the desired cover.)
At first glance, it seems sensible to use a similar idea to upper bound $Q(d,n)$.
However, our attempts of cutting out neighborhoods of often-covered points of an $n$-fold cover,
and placing patches of $n$ sets instead, always produced points that were covered
an excessive amount of times. Finally, we decided to use Gale's $n$-fold cover of $S^d$.

Ky Fan's theorem~\cite{fan} can be used to prove a lower bound of $\left \lceil \frac{d}{2} \right \rceil+1$
for  $Q(d,1)$. For proving a lower bound of $Q(d,n)$, we will use the following reformulation of Ky Fan's 
theorem that has been presented in~\cite{circular}.

\begin{theorem}\textbf{(Ky Fan's Theorem.)}\label{thm:fan}

Let $\mathcal{F}$ be an antipodal cover of $S^d$. Assume that a linear order is given on $\mathcal{F}$.
Then there exist $F_1 < F_2 < \cdots< F_{d+2}$ sets of $\mathcal{F}$ such
that \[ F_1\cap-F_2\cap F_3\cap -F_{4}\cap \cdots \cap (-1)^{d+1}F_{d+2}\neq \emptyset.\]
\end{theorem}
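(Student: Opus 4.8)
The plan is to deduce this covering reformulation from the combinatorial (Tucker-type) version of Ky Fan's lemma: on an antipodally symmetric triangulation of $S^d$ equipped with an antipodal labeling by $\{\pm 1,\dots,\pm N\}$ that has no complementary edge, there is an alternating simplex. First I would reduce to a finite family: by compactness of $S^d$ extract a finite subcover $\{A_1<\dots<A_N\}$, keeping the induced linear order, so that a chain for the subfamily is a chain for $\mathcal F$. Then I would pass from open to closed sets using the shrinking lemma, choosing closed $C_i\subseteq A_i$ that still cover $S^d$; since $C_i\subseteq A_i$ and $A_i$ has no antipodal pair we keep $C_i\cap(-C_i)=\emptyset$, and because $C_i$ and $-C_i$ are disjoint compacta the gap $\gamma_i=\mathrm{dist}(C_i,-C_i)$ is positive, as is $\eta_i=\mathrm{dist}(C_i,\,S^d\setminus A_i)$.

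Next I would fix an antipodally symmetric triangulation $T$ of $S^d$ of mesh smaller than $\min_i\min(\gamma_i,\eta_i)$ and define the labeling by signed first containment: for a vertex $v$ let $i$ be the least index with $v\in C_i$ or $-v\in C_i$, and set $\lambda(v)=+i$ in the first case and $\lambda(v)=-i$ in the second. These are mutually exclusive since $C_i\cap(-C_i)=\emptyset$, such an $i$ always exists because the $C_i$ cover $S^d$, and $\lambda$ is plainly antipodal. The one genuine hypothesis to check is the absence of complementary edges, and this is exactly where the gap $\gamma_i$ is used: if an edge $\{u,v\}$ carried labels $+j$ and $-j$, then $u\in C_j$ and $-v\in C_j$, so $-u\in -C_j$ would lie within the mesh of $-v\in C_j$, forcing $\mathrm{dist}(C_j,-C_j)$ below the mesh, contrary to mesh $<\gamma_j$.

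With the hypotheses in place I would invoke the combinatorial Ky Fan lemma to produce an alternating simplex $\sigma=\{v_0,\dots,v_d\}$ whose signed labels have strictly increasing absolute values $j_0<j_1<\cdots$ and alternate in sign starting with $+$. Since $\sigma$ has diameter below the mesh and each vertex lies (up to sign) in the corresponding $C_{j_t}$, the bound mesh $<\eta_{j_t}$ lets me slide to a single common point $x$ of $\sigma$ lying in the honest \emph{open} set $A_{j_t}$ (or $-A_{j_t}$) dictated by the sign. This yields a point in an alternating intersection $A_{j_0}\cap(-A_{j_1})\cap\cdots$ with the required increasing indices and sign pattern.

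The hard part will be the sharp length. The combinatorial lemma furnishes a $d$-simplex, hence $d+1$ vertices and a chain of only $d+1$ sets, whereas the statement demands $d+2$; obtaining the extra set is precisely the delicate calibration that makes this theorem a strengthening of the Lusternik--Schnirelmann--Borsuk bound (a chain of length $d+1$ would only forbid antipodal covers by $d$ sets, which is false). I expect to close this gap either by appealing to Fan's theorem in its stronger form, where the alternating configuration is read off a suspension/join carrying one additional dimension of sign information, or by an auxiliary antipodal argument that, given the alternating chain at $x$, produces a further set of the cover extending the pattern at $x$ or $-x$. Converting the closed-set reasoning back to genuinely open membership is a secondary technical point, already handled above through the $\eta_i$ gaps, but it must be tracked carefully so that the final intersection is of the original open sets and not merely of their closures.
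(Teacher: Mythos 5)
First, note that the paper offers no proof of this statement: it is quoted as Ky Fan's theorem, citing Fan's 1952 paper and the reformulation of Simonyi and Tardos, so your attempt can only be compared with the standard derivation from the combinatorial (Tucker--Fan) lemma, which is indeed the route you chose. Your preparatory work is correct and well calibrated: the reduction to a finite subfamily by compactness (needed to take minima of the gaps), the shrinking to closed sets $C_i\subseteq A_i$ that still cover, the positivity of $\gamma_i=\mathrm{dist}(C_i,-C_i)$ and $\eta_i=\mathrm{dist}(C_i,S^d\setminus A_i)$, the signed first-containment labeling (antipodal, well defined, total), the exclusion of complementary edges when the mesh is below $\min_i\gamma_i$, and the use of the $\eta_i$ to place one common point of the alternating simplex inside the original open sets rather than in closures.

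However, the proof is incomplete exactly where you flag it, and this is not a peripheral issue but the heart of the theorem: Fan's combinatorial lemma on $S^d$ yields an alternating $d$-simplex, hence only $d+1$ sets $A_{k_0}\cap(-A_{k_1})\cap\cdots\cap(-1)^dA_{k_d}$, and you leave the production of the $(d+2)$-nd set to one of two unexecuted alternatives. Your second alternative is the correct one, and it needs no new machinery --- it is forced by the minimality built into your own labeling. Let $v_d$ be the vertex of the alternating simplex carrying the label $(-1)^dk_d$, so that the point $w=(-1)^dv_d$ (meaning $v_d$ for $d$ even and $-v_d$ for $d$ odd) lies in $C_{k_d}$. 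By the first-containment rule, no $C_i$ with $i<k_d$ contains $v_d$ or $-v_d$; moreover $C_{k_d}$ cannot contain $-w$, since it contains $w$ and is antipode-free. As the $C_i$ cover $S^d$, we get $-w=(-1)^{d+1}v_d\in C_j$ for some $j>k_d$. Now the same $\eta$-argument you already use applies: if $x$ is the common point of the simplex, then $\mathrm{dist}\bigl((-1)^{d+1}x,\,C_j\bigr)\le\mathrm{mesh}<\eta_j$, hence $(-1)^{d+1}x\in A_j$, i.e.\ $x\in(-1)^{d+1}A_j$. This appends one more set with precisely the required sign, giving $x\in A_{k_0}\cap(-A_{k_1})\cap\cdots\cap(-1)^dA_{k_d}\cap(-1)^{d+1}A_j$ with $k_0<\cdots<k_d<j$, which is the statement. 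Without this step your argument only shows that an antipodal open cover needs at least $d+1$ sets, which is weaker than Lusternik--Schnirelmann, as you yourself observe; with it, your outline closes into a complete proof.
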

\begin{flushright} \QED \end{flushright}

We now give our bounds on $Q(d,n)$.

\begin{theorem}\label{thm:Q}
$\left \lceil \frac{d}{2}\right \rceil +n \le Q(d,n) \le d+n$.
\end{theorem}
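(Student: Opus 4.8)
The statement has two independent halves, and I would treat the upper and lower bounds separately.

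\textbf{Upper bound ($Q(d,n)\le d+n$).} Here it suffices to exhibit one antipodal $n$-fold cover in which no point is covered too often, and I would use the Gale $n$-fold cover from the introduction: at each of the $d+2n$ Gale points $p_1,\dots,p_{d+2n}$ place the open hemisphere $H_i=\{x\in S^d:\langle x,p_i\rangle>0\}$. This family is antipodal, since no open hemisphere contains an antipodal pair, and by Gale's half-space property it is an $n$-fold cover. The number of sets covering a point $x$ is exactly $|\{i:\langle x,p_i\rangle>0\}|$. Applying Gale's property to the open half-space with inner normal $-x$ gives $|\{i:\langle x,p_i\rangle<0\}|\ge n$, so at most $(d+2n)-n=d+n$ of the points satisfy $\langle x,p_i\rangle>0$. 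Thus no point is covered more than $d+n$ times, which is the desired bound.

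\textbf{Lower bound ($Q(d,n)\ge\lceil d/2\rceil+n$).} I would first recast the goal combinatorially: a point of $S^d$ is covered at least $\lceil d/2\rceil+n$ times as soon as the cover admits an \emph{alternating chain of length $d+2n$}, that is, sets $F_1<F_2<\cdots<F_{d+2n}$ in a fixed linear order with $F_1\cap -F_2\cap\cdots\cap(-1)^{d+2n+1}F_{d+2n}\ne\emptyset$. Indeed, any point $x$ in this intersection lies in $F_i$ for every odd index $i$, and there are $\lceil(d+2n)/2\rceil=\lceil d/2\rceil+n$ such indices; these sets are distinct, so $x$ is covered at least $\lceil d/2\rceil+n$ times. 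The case $n=1$ is precisely the reformulation of Ky Fan's theorem (Theorem~\ref{thm:fan}), which already supplies a chain of length $d+2$. The remaining task is therefore to promote this to chains of length $d+2n$ for antipodal $n$-fold covers.

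The natural route is induction on $n$. Given an antipodal $n$-fold cover $\mathcal{F}$ with a fixed linear order, deleting a single set yields an antipodal $(n-1)$-fold cover, to which the inductive hypothesis supplies an alternating chain of length $d+2n-2$ together with a point $y$ in its open alternating intersection. One then wants to extend this chain by two further sets, placed in the correct order positions and carrying the correct alternating signs, while keeping the intersection nonempty. This is where the $n$-fold hypothesis must be used: $y$ and its antipode are covered more often than the deleted chain accounts for, so there are ``spare'' sets that ought to supply the two extra links. I expect this extension step to be the main obstacle, precisely because it is the point at which the topological content of Ky Fan's theorem must be reinvested rather than merely quoted; making it rigorous will likely require a Borsuk--Ulam or degree argument on a neighborhood of $\{y,-y\}$, or else encoding the multiplicity through a join construction $S^d * S^{2n-2}\cong S^{d+2n-1}$ and applying the ordinary Ky Fan theorem there. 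Once an alternating chain of length $d+2n$ is secured by either method, the reformulation above yields $Q(d,n)\ge\lceil d/2\rceil+n$ and completes the proof.
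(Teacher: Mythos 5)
Your argument for $Q(d,n)\le d+n$ is correct and is exactly the paper's: in Gale's $n$-fold cover, the half-space with inner normal $-x$ contains at least $n$ poles, so $x$ lies in at most $d+n$ of the $d+2n$ hemispheres.

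\textbf{Lower bound.} Here there is a genuine gap, and you have named it yourself. Your reduction is fine as far as it goes: a point in the alternating intersection of a chain $F_1<\cdots<F_{d+2n}$ lies in all $\left\lceil (d+2n)/2\right\rceil=\left\lceil d/2\right\rceil+n$ odd-indexed sets. But the existence of such a chain in every antipodal $n$-fold cover is a strengthened, $n$-fold version of Ky Fan's theorem that you never prove; the inductive ``extension step'' is precisely the hard content, and it faces concrete obstructions you do not address: deleting one set accounts for only one of the two sets you must add; inserting a set into the middle of an alternating chain flips the prescribed signs of every set after the insertion point, so the point furnished by the inductive hypothesis need not survive; and nothing guarantees the two new sets can be slotted into the linear order in positions compatible with the required signs. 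A proof sketch that ends with ``will likely require a Borsuk--Ulam or degree argument, or a join construction'' is a statement of intent, not a proof.

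The paper sidesteps any strengthening of Ky Fan's theorem. From the antipodal $n$-fold cover $\mathcal{F}$ it forms the derived family $\mathcal{F}'$ of all nonempty intersections of $n$-element subsets of $\mathcal{F}$; the $n$-fold property makes $\mathcal{F}'$ an antipodal $1$-fold cover, so the ordinary Theorem~\ref{thm:fan} applies to it. The key device is the choice of order: each $C\in\mathcal{F}'$ is tagged with the increasing tuple $v(C)=(F_1,\dots,F_n)$ of its factors, and $\mathcal{F}'$ is ordered lexicographically. Ky Fan then gives $C_1<\cdots<C_{d+2}$ with alternating intersection nonempty; antipodality forces consecutive first coordinates to differ, and lexicographic monotonicity upgrades this to all first coordinates being pairwise distinct and increasing. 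A point $x$ in the odd-indexed $C_{2j-1}$'s therefore lies in $\left\lceil (d+2)/2\right\rceil$ distinct first-coordinate sets, and additionally in the remaining $n-1$ coordinates of the last such tuple, all of which are larger in the order than every first coordinate counted before; this yields $\left\lceil (d+2)/2\right\rceil+n-1=\left\lceil d/2\right\rceil+n$ distinct sets of $\mathcal{F}$ containing $x$. If you want to rescue your write-up, this lexicographic intersection trick is exactly the device that replaces the unproven ``chain of length $d+2n$'' theorem your argument hinges on.
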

\begin{proof}

First we prove the lower bound.
Let $\mathcal{F}$ be an antipodal $n$-fold cover of $S^d$ with open sets.
The intersections of all intersecting subsets of $\mathcal{F}$ consisting of $n$ sets
form an antipodal $1$-fold cover $\mathcal{F}'$ of $S^d$ with open sets.
Explicitly, 
$\mathcal{F}':=\{\bigcap \mathcal{C}:\mathcal{C}\subset \mathcal{F}, 
|\mathcal{C}|=n \textrm{ and } \bigcap \mathcal{C} \neq \emptyset\}$.
For an arbitrary linear order of $\mathcal{F}$, every set  of $\mathcal{F}'$ is of the form
$C=\bigcap_{i=1}^n F_i$ for some $F_1 < F_2 < \cdots < F_n$ in $\mathcal{F}$.
Hence, we may assign the tuple $v(C):=(F_1,F_2,\dots,F_n)$ to $C$ and define
a linear order on $\mathcal{F}'$, by setting $C_1 < C_2$ if and only if
$v(C_1) < v(C_2)$ in the lexicographical order of the tuples $v(C_1)$ and $v(C_2)$.
By Ky Fan's theorem there exist sets $C_1< C_2< \cdots < C_{d+2}$ of $\mathcal{F}'$ 
such that $\bigcap_{i=1}^{d+2} (-1)^{i-1}C_i$ is not empty. 
 Since $\mathcal{F}'$ is an antipodal cover, the first coordinates 
of the tuples associated to consecutive $C_i$'s 
are different; by the additional assumption that $v(C_1) < \dots < v(C_{d+2})$, all of these first 
coordinates are also pairwise different. Let $x \in \bigcap_{j=1}^{\lceil (d+2)/2 \rceil} C_{2j-1}$.
This point is in all the first coordinates (sets) of the tuples $v(C_{2j-1})$, and in all
the coordinates (sets) of the last tuple.
 As all of these sets are different, $x$ is in at least 
$\lceil (d+2)/2 \rceil+n-1= \left \lceil \frac{d}{2} \right \rceil +n$
different sets of $\mathcal{F}$.

The upper bound is given by Gale's $n$-fold cover of $S^d$. In this cover a point $x \in S^d$ is not covered 
by precisely those hemispheres (sets) whose poles are separated from $x$ by the hyperplane
through the origin and orthogonal to $\vec{x}$. Since there are at least $n$ of these
sets and there are $d+2n$ sets in this cover, $x$ is covered at most $d+n$ times.
\end{proof}

We conjecture that the upper bound of Theorem~\ref{thm:Q} is tight.

\begin{conj} \label{con:Q}
$Q(d,n)=d+n$ for $n \ge 2$.
\end{conj}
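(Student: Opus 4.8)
The upper bound $Q(d,n)\le d+n$ is already supplied by Gale's $n$-fold cover, so the conjecture is equivalent to the lower bound $Q(d,n)\ge d+n$ for $n\ge 2$: every antipodal $n$-fold open cover of $S^d$ must contain a point lying in at least $d+n$ of its sets. The plan is to close the gap between this target and the bound $\lceil d/2\rceil+n$ of Theorem~\ref{thm:Q}. It is worth emphasizing first \emph{why} a genuinely new idea is needed. In the proof of Theorem~\ref{thm:Q}, Ky Fan's theorem (Theorem~\ref{thm:fan}) produces a single alternating chain $C_1<\cdots<C_{d+2}$ in the cover $\mathcal F'$ of $n$-wise intersections, and only the odd-indexed members concentrate on the chosen point $x$ while the even-indexed members concentrate on $-x$; the alternation forces a balance between $x$ and $-x$, so at most about half of the $d+2$ distinct first coordinates can be made to cover $x$. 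A short counting check shows that no rearrangement of a \emph{single} Ky Fan chain can beat $\lceil d/2\rceil+n$, so the improvement must come from additional topological input, and this input must be sensitive to $n$: for $n=1$ the true value $\lfloor d/2\rfloor+2$ shows that the factor of $\tfrac12$ is unavoidable, whereas for $n\ge 2$ the conjecture asserts that it disappears entirely.

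My first line of attack would be an induction on $d$. Restricting any antipodal $n$-fold open cover of $S^d$ to an equatorial great subsphere $S^{d-1}$ yields an antipodal $n$-fold open cover there, and a point of $S^{d-1}$ covered $k$ times is covered $k$ times in the ambient cover; hence $Q(d,n)\ge Q(d-1,n)$, and with the base case $Q(0,n)=n$ the whole problem reduces to gaining exactly one extra unit of coverage each time the dimension increases. The decisive and delicate feature is that this per-dimension gain must be $1$ for $n\ge 2$ but only $\tfrac12$ on average for $n=1$. To realize the gain I would not restrict to a single equator but sweep a pencil of great subspheres $S^{d-1}_t$ through a fixed $S^{d-2}$, track how a maximally covered point of $S^{d-1}_t$ moves with $t$, and run a parametrized, $\mathbb Z_2$-equivariant argument over the circle of parameters to force the extra set; the role of the second fold should be to provide the slack that makes the parametrized degree nonzero.

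A second, more direct route would be to replace the single application of Ky Fan by a stronger equivariant statement tailored to multiple covers. Concretely, I would look for a Tucker- or Ky-Fan-type lemma for $n$-fold antipodal covers that outputs, instead of one alternating chain, a configuration of $d+2$ sets whose odd \emph{and} even contributions can be steered to the same hemisphere once $n\ge 2$; alternatively one can encode a hypothetical cover in which no point is covered more than $d+n-1$ times as a $\mathbb Z_2$-map from $S^d$ into an appropriate simplicial $\mathbb Z_2$-space built from the nerve of the cover, and derive a contradiction by a degree or Borsuk-Ulam-index computation. Either formulation isolates the same phenomenon: the extra folds should let one \emph{unfold} the alternation and collect the coverage on a single point.

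The main obstacle, in every approach, is precisely this breaking of the antipodal balance --- turning the $\lceil d/2\rceil$ of Ky Fan into a full $d$ --- and doing so in a way that provably switches on at $n=2$ and is false at $n=1$. I would therefore begin by settling the first nontrivial cases $Q(1,2)=3$ and, more importantly, $Q(d,2)=d+2$, where the extremal Gale covers can be described explicitly. A rigidity statement showing that any antipodal $2$-fold cover with no point covered $d+2$ times must essentially coincide with a sub-configuration of Gale's cover would both anchor the induction and reveal which structural feature of the second fold supplies the missing unit of coverage; I expect that understanding exactly this rigidity will be the hard part.
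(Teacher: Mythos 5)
The statement you were asked to prove is Conjecture~\ref{con:Q}, which the paper itself leaves open: the authors only establish $\left\lceil \frac{d}{2}\right\rceil + n \le Q(d,n) \le d+n$ (Theorem~\ref{thm:Q}) and explicitly conjecture that the upper bound is tight for $n \ge 2$. Your proposal does not close this gap either --- it is a research plan, not a proof, and nothing in it constitutes a completed argument. What you get right: the upper bound does come from Gale's cover, so the content of the conjecture is the lower bound $Q(d,n) \ge d+n$; your diagnosis of why the paper's technique saturates at $\left\lceil \frac{d}{2}\right\rceil + n$ (the alternation in Ky Fan's theorem, Theorem~\ref{thm:fan}, splits the $d+2$ distinct sets between $x$ and $-x$) is accurate; and the monotonicity $Q(d,n) \ge Q(d-1,n)$ obtained by restricting to an equatorial subsphere is correct, though it is strictly weaker than what Theorem~\ref{thm:Q} already gives, so combined with the base case it yields only $Q(d,n)\ge n$.

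The genuine gap is that each of your three routes stops exactly where the mathematics would have to begin. First, the induction route needs a gain of one unit of coverage per dimension; your proposed ``sweep'' of great subspheres $S^{d-1}_t$ presupposes that a maximally covered point of $S^{d-1}_t$ can be tracked continuously (or at least equivariantly) in $t$, but the set of maximally covered points can jump discontinuously as $t$ varies, and you formulate no actual topological statement --- no map, no index, no degree --- that the second fold would render nonzero. Second, the ``stronger Ky-Fan-type lemma'' that steers odd and even members of the chain to the same point is never stated, let alone proved; since for $n=1$ the true value $\left\lfloor \frac{d}{2}\right\rfloor+2$ shows that any such lemma must fail for $1$-fold covers, you would need to identify precisely where the hypothesis $n\ge 2$ enters its proof, and no candidate mechanism is given. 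Third, the rigidity of extremal antipodal $2$-fold covers is asserted as the anchor of the whole program and then conceded to be ``the hard part.'' In short, your proposal frames the problem correctly --- as the paper also does --- but it proves nothing beyond what Theorem~\ref{thm:Q} already contains, and the conjecture remains open.
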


\section{Bounds on $f(d,n,m)$ and $\overline{f}(d,n,m)$. } \label{sec:f}

In this section we prove our results for $f(d,n,m)$ and $\overline{f}(d,n,m)$.
We start by showing the exact values of $\overline{f}(d,n,m)$ and $f(d,1,m)$.

\begin{theorem}\label{thm:f_c}
$\overline{f}(d,n,m) = d+2m-1$ for $m>n$.
\end{theorem}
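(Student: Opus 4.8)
The plan is to prove the two matching inequalities $\overline{f}(d,n,m)\ge d+2m-1$ and $\overline{f}(d,n,m)\le d+2m-1$ separately: the lower bound by restricting an arbitrary cover to the equator and invoking Stahl's bound one dimension down, and the upper bound by deleting a single well-chosen set from Gale's $m$-fold cover of $S^d$.

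For the lower bound, let $\mathcal{F}$ be any antipodal $\overline{(n,m)}$-fold cover of $S^d$ by open sets, and let $E\cong S^{d-1}$ be the equator bounding the northern hemisphere. Since $E$ lies in the closed northern hemisphere, every point of $E$ is covered $m$ times by $\mathcal{F}$, so the traces $\{F\cap E:F\in\mathcal{F}\}$ form an $m$-fold cover of $E$ by sets that are open in $E$. This cover is antipodal, because antipodal points of $E$ are antipodal in $S^d$ and hence no trace can contain such a pair. By Stahl's theorem an antipodal $m$-fold cover of $S^{d-1}$ with open sets needs at least $(d-1)+2m=d+2m-1$ sets, so at least this many members of $\mathcal{F}$ meet $E$; therefore $|\mathcal{F}|\ge d+2m-1$.

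For the upper bound I would start from Gale's $m$-fold cover of $S^d$, which consists of $d+2m$ open hemispheres whose poles $g_1,\dots,g_{d+2m}$ have the property that every open half-space through the origin contains at least $m$ of them. Any hemisphere cover is automatically antipodal, and every point of $S^d$ is covered at least $m$ times. After a rotation I may assume that one pole, say $g_{d+2m}$, equals the south pole $-e_{d+1}$; rotations fix the origin and carry half-spaces to half-spaces, so the defining property is preserved. I then delete the hemisphere at $g_{d+2m}$ and claim the remaining $d+2m-1$ open hemispheres are the desired cover. Deleting one set lowers every coverage count by at most one, so every point is still covered at least $m-1\ge n$ times, giving the global $n$-fold condition. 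The crucial observation is that the closed northern hemisphere is unaffected: for $x$ with $x_{d+1}\ge 0$ we have $\langle x,-e_{d+1}\rangle=-x_{d+1}\le 0$, so the deleted pole never lies in the open half-space of $x$, and the deleted hemisphere never covered $x$ to begin with. Hence the closed northern hemisphere remains covered at least $m$ times.

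The steps I expect to require the most care are the bookkeeping ones, which I anticipate dispatching directly: verifying that the equatorial traces are genuinely open and antipodal so that Stahl's theorem applies verbatim on $S^{d-1}$, and confirming that a coverage count can drop by at most one per deleted set so that $m-1\ge n$ suffices globally. The main conceptual point is the choice to send a Gale pole to the south pole, which localizes the deletion entirely to the southern hemisphere and leaves the $m$-fold requirement on the closed north untouched. Combining the two inequalities yields $\overline{f}(d,n,m)=d+2m-1$.
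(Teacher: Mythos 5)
Your proposal is correct and follows essentially the same route as the paper: the lower bound by restricting the cover to the equator and applying Stahl's bound to the resulting antipodal $m$-fold cover of $S^{d-1}$, and the upper bound by rotating Gale's $m$-fold cover so that one hemisphere becomes the open southern hemisphere and then removing it. Your write-up merely spells out the details the paper leaves implicit (openness and antipodality of the equatorial traces, and the fact that the deleted hemisphere never touched the closed northern hemisphere while $m-1\ge n$ handles the rest).
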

\begin{proof}
Let $\mathcal{F}$ be an antipodal $\overline{(n,m)}$-fold cover of $S^d$ with
open sets. 
Note that the intersection of $\mathcal{F}$ with the equator is an antipodal $m$-fold cover of $S^{d-1}$
with open sets.
Therefore, as shown by Stahl~\cite{stahl}, $|\mathcal{F}|\ge d+2m-1$, which proves the lower bound. 
For the upper bound, rotate Gale's $m$-fold cover of $S^d$ so that one of the hemispheres (sets) in this cover
coincides with the southern hemisphere.  Remove this hemisphere to obtain 
an antipodal $\overline{(n,m)}$-fold cover of $S^d$ with $d+2m-1$ open sets.
\end{proof}

\begin{figure}
  \begin{center}
   \includegraphics[width=0.5\textwidth]{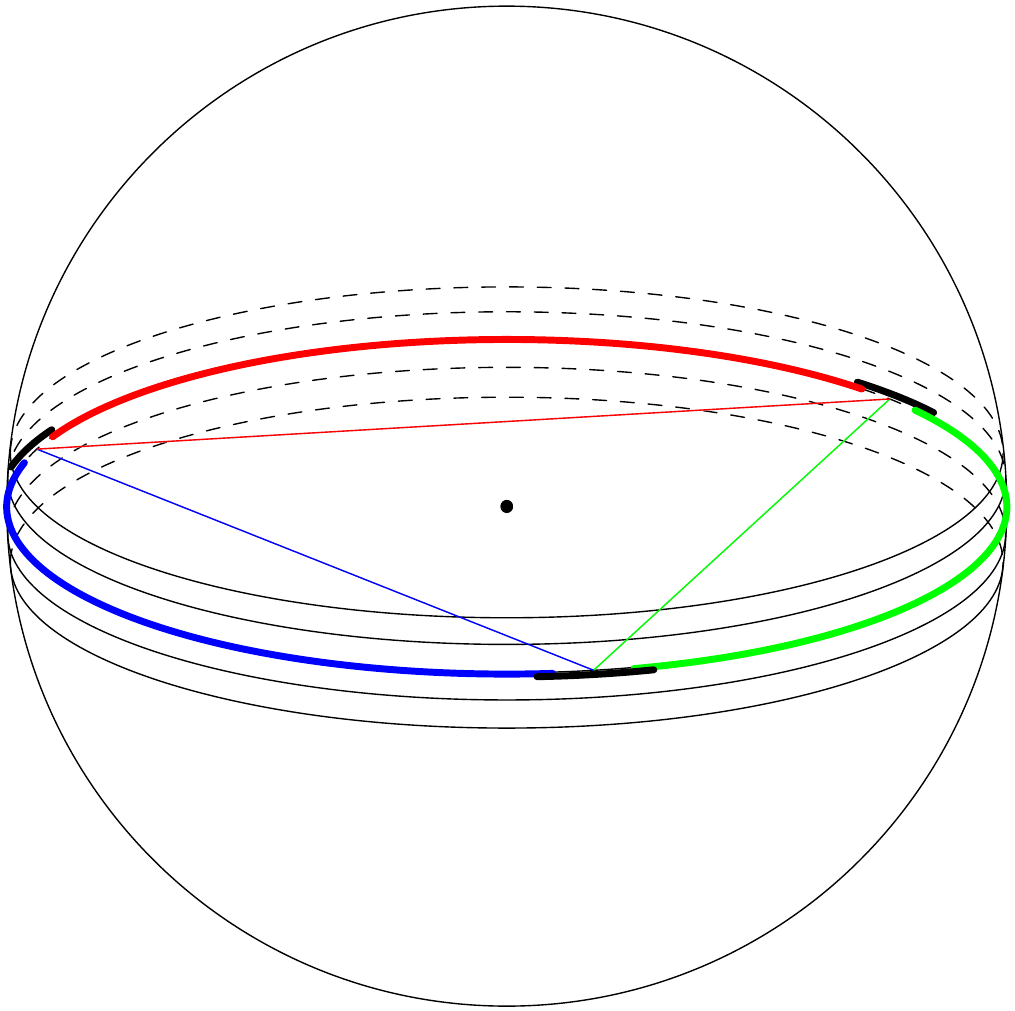}
\end{center}
\caption{The $1$-fold cover of the equator in the proof of Theorem~\ref{thm:f_1_m}.}
\label{fig:equator}
\end{figure}

\begin{theorem}\label{thm:f_1_m}
For $d \ge 2$, $f(d,1,m)$ is equal to:

\begin{equation*}
  f(d,1,m) =
  \begin{cases}
    d+2 & \text{if }  m \le \left \lfloor \frac{d}{2}\right \rfloor + 1,\\
    \left \lfloor \frac{d-1}{2} \right \rfloor + 2 +m & \text{if } m \ge \left \lfloor \frac{d}{2}\right \rfloor + 1.
  \end{cases}
\end{equation*}
\end{theorem}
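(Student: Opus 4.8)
My plan is to prove the two cases by resolving a single maximum: I will show that for all $m>1$,
\[
f(d,1,m)=\max\left\{\,d+2,\ \left\lfloor\tfrac{d-1}{2}\right\rfloor+2+m\,\right\},
\]
which is equivalent to the stated formula, since the two expressions agree exactly at $m=\lfloor d/2\rfloor+1$ (using $\lfloor\tfrac{d-1}{2}\rfloor+\lfloor\tfrac{d}{2}\rfloor=d-1$) and the second overtakes the first precisely for larger $m$. Thus it suffices to prove two lower bounds, $f\ge d+2$ and $f\ge\lfloor\tfrac{d-1}{2}\rfloor+2+m$, each valid for all $m$, together with matching constructions in the two ranges. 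The bound $f\ge d+2$ is immediate, as an antipodal $(1,m)$-fold cover is in particular an antipodal $1$-fold cover of $S^d$, to which the Lusternik--Schnirelmann form of the Borsuk--Ulam theorem applies; this already settles the lower bound in the first range.

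For the lower bound $f\ge\lfloor\tfrac{d-1}{2}\rfloor+2+m$ I would work on and just above the equator. Restricting the cover to the equator $S^{d-1}$ yields an antipodal $1$-fold cover, so by the known value $Q(d-1,1)=\lfloor\tfrac{d-1}{2}\rfloor+2$ there is an equatorial point covered by at least that many sets; concretely, running Ky Fan's theorem (Theorem~\ref{thm:fan}) on $S^{d-1}$ produces a chain $G_1<\cdots<G_{d+1}$ and an equatorial point $x$ lying in the odd-indexed sets while $-x$ lies in the even-indexed ones. The decisive step is to convert this equatorial multiplicity into \emph{additional} sets using the open northern hemisphere: a point $x^{+}$ just north of $x$ is covered at least $m$ times and, since the sets are open, inherits all the odd-indexed sets through $x$. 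The heart of the matter --- and the step I expect to be the main obstacle --- is to guarantee that the $m$-fold requirement in the north cannot be satisfied merely by recycling the sets already forced at the equator (a naive local count only yields the maximum, not the sum). Here the antipodal-free hypothesis must be exploited: tracking both $x^{+}$ and the almost antipodal point $(-x)^{+}$, I would argue that the odd chain, the even chain, and the surplus sets demanded by the northern $m$-fold condition are forced to be essentially disjoint, which produces the sum rather than the maximum.

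For the upper bound in the small range $m\le\lfloor d/2\rfloor+1$ the target is an antipodal $1$-fold cover of $S^d$ with exactly $d+2$ sets whose open northern hemisphere is already covered $\lfloor d/2\rfloor+1\ge m$ times; such a cover is automatically a $(1,m)$-fold cover. I would obtain it by orienting Gale's $1$-fold cover (or the simplex-projection cover used for $Q(d,1)$) so that its unavoidable high-multiplicity region, whose existence is exactly what $Q(d,1)=\lfloor d/2\rfloor+2$ guarantees, is pushed into the open north.

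In the large range I would start from Gale's $m$-fold cover of $S^d$, which uses $d+2m$ sets and covers every point at least $m$ times, and then delete those hemispheres whose removal lowers the coverage only on the equator and in the open southern hemisphere. Gale's defining property --- every open halfspace through the origin contains at least $m$ poles --- is precisely what certifies that the open north remains $m$-covered after the deletions while the remainder stays a $1$-fold cover, and the figure's $1$-fold cover of the equator records the equatorial trace that survives. The delicate point is the bookkeeping: one must select exactly which $\left(d+m-\lfloor\tfrac{d-1}{2}\rfloor-2\right)$ hemispheres to discard so that the surviving $\lfloor\tfrac{d-1}{2}\rfloor+2+m$ sets meet both the northern $m$-fold and the global $1$-fold conditions at once.
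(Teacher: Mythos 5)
Your reformulation of the theorem as $f(d,1,m)=\max\left\{d+2,\ \left\lfloor\tfrac{d-1}{2}\right\rfloor+2+m\right\}$ and your first lower bound (Lusternik--Schnirelmann applied to the underlying antipodal $1$-fold cover) are correct and coincide with the paper. The other three components each contain a genuine gap, and in two of them the proposed route cannot be repaired.

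\textbf{The lower bound $\left\lfloor\frac{d-1}{2}\right\rfloor+2+m$.} You correctly isolate the crux --- the $m$ sets demanded in the north must be disjoint from the sets accumulated at the equator --- but you leave it unresolved, and the device you propose cannot resolve it: $x^{+}$ and $(-x)^{+}$ both lie in the open north and are \emph{not} antipodal to each other, and antipodal-freeness constrains only genuine antipodal pairs. The disjointness is true, but its proof is exactly the step your sketch omits, which is the paper's single decisive move: perturb the heavily covered equatorial point $x$ slightly \emph{south} to a point $x^{-}$. Since the sets are open (and the cover may be assumed finite), the same $Q(d-1,1)$ sets cover $x^{-}$; its antipode $-x^{-}=(-x)^{+}$ lies in the \emph{open} north and is therefore covered at least $m$ times; and antipodal-freeness now applies to the genuine pair $\{x^{-},-x^{-}\}$, forcing the two families apart and producing the sum. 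A secondary issue: Theorem~\ref{thm:fan} on the equator yields only $\left\lceil\frac{d+1}{2}\right\rceil$ sets at $x$, which is one short of $Q(d-1,1)=\left\lfloor\frac{d-1}{2}\right\rfloor+2$ when $d$ is odd, so, as the paper does, you must quote the exact value of $Q(d-1,1)$ from the cited literature rather than rederive it from Ky Fan.

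\textbf{The upper bound $d+2$ for $m\le\lfloor d/2\rfloor+1$.} Rotating Gale's $1$-fold cover or the simplex-projection cover so that ``its high-multiplicity region is pushed into the open north'' cannot work: $Q(d,1)=\lfloor d/2\rfloor+2$ guarantees one heavily covered \emph{point}, not a hemisphere's worth of them, and both candidate covers have once-covered points inside \emph{every} open hemisphere. In Gale's cover with poles the vertices $v_1,\dots,v_{d+2}$ of a regular simplex, $\langle v_i,v_j\rangle<0$ for $i\ne j$, so each $v_i$ is covered exactly once, and Gale's defining property places some $v_i$ in every open hemisphere; in the facet-projection cover the once-covered points are dense. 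Hence after any rotation the cover is not even a $(1,2)$-fold cover. The paper achieves high northern multiplicity by a different mechanism: it builds an antipodal $(d+2)$-set cover of the equator $S^{d-1}$ (trimmed facet projections $F_i$ plus the trimmed high-multiplicity set $D$) and then extends each of the $d+1$ sets $F_i$ to contain essentially the \emph{whole} open northern hemisphere (open north, plus a belt piece, minus the closure of the antipodal belt piece); all the work lies in choosing $\varepsilon_1,\varepsilon_2$ and the belt heights so that antipodal-freeness and coverage of the belt and the south survive.

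\textbf{The upper bound for $m\ge\lfloor d/2\rfloor+1$.} Deleting hemispheres from Gale's $m$-fold cover leaves a cover consisting only of open hemispheres, and such a cover cannot meet the bound. Already for $d=2$, $m=2$ your target would be four open hemispheres covering $S^2$ once with the open north covered twice, which is impossible: either some pole sits exactly at the north pole, and then the remaining three hemispheres would have to cover all of $S^2$ by themselves (impossible, since three vectors cannot positively span $\mathbb{R}^3$), or no pole does, and then the requirement that points just above generic equatorial points see at least two poles on each side forces the four pole projections into two antipodal pairs, after which no assignment of pole heights covers both the open north twice and the closed south once. So no bookkeeping of which hemispheres to discard can succeed; non-hemispherical sets are unavoidable. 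The paper needs no such selection: its cover for large $m$ is the $(d+2)$-set construction of the first case plus $m-\lfloor d/2\rfloor-1$ copies of the open northern hemisphere, so the second case of the theorem is a corollary of the first-case construction rather than a separate deletion argument.
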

\begin{proof}
First we prove the lower bound. Let $\mathcal{F}$ be an antipodal
$(1,m)$-fold cover of $S^d$ with open sets. Since $\mathcal{F}$ covers
the equator once, there is a point
in the equator of $S^d$ covered at least $Q(d-1,1)=\left \lfloor \frac{d-1}{2} \right \rfloor+2$ 
times. Just below this point there is a point in the southern hemisphere covered by the same sets; its antipodal
point in the northern hemisphere is covered by at least $m$ other sets.
Thus there are at least $\left \lfloor \frac{d-1}{2} \right \rfloor+2+m$ sets in $\mathcal{F}$.
This lower bound is tight for $ m \ge \left \lfloor \frac{d}{2}\right \rfloor + 1$. For $m < \left \lfloor \frac{d}{2}\right \rfloor + 1$, the
 better lower bound of $d+2$ follows immediately from Lusternik-Schnirelmann theorem~\cite{LS} and the fact that 
$\mathcal{F}$ is a $1$-fold cover of $S^d$.

For the upper bound we carefully construct an antipodal 
$\left (1,\left \lfloor \frac{d}{2}\right \rfloor + 1 \right )$-fold 
cover of $S^d$ with $d+2$ open sets. This cover proves the tight upper bound of $d+2$ for
$m \le \left \lfloor \frac{d}{2}\right \rfloor + 1$. For $m > \left \lfloor \frac{d}{2}\right \rfloor + 1$,
we add $m-\left \lfloor \frac{d}{2}\right \rfloor - 1$ open northern hemispheres
to this cover to produce an antipodal $(1,m)$-fold cover of $S^d$ with 
$ \left \lfloor \frac{d-1}{2} \right \rfloor + 2 +m$ open sets.

Assume that $S^d$ is the unit sphere centered at the origin. 
We start by constructing a $1$-fold cover of its equator $S^{d-1}$ 
(the intersection of $S^d$ with the hyperplane $x_{d+1}=0$) in the following way.
Fix a regular $d$-simplex $\tau$ centered at the origin and having its vertices
on $S^{d-1}$. Project its closed facets from the origin to $S^{d-1}$ and 
let $F_1', \dots, F_{d+1}'$, be these projections. This produces an antipodal
cover of $S^{d-1}$ with $d+1$ closed sets. Let $D'$ be the set of points
of $S^{d-1}$ that are covered at least $\left \lceil \frac{d+2}{2} \right \rceil=\left \lceil \frac{d}{2} \right \rceil+1$
times in this cover.  Note that $V'$ is closed, and since there are only $d+1$
sets $F_i'$, it does not contain a pair of antipodal points.
Choose $\varepsilon_2 > \varepsilon_1 > 0$. 
Let $D$ be the open $\varepsilon_2$-neighborhood of $D'$, in $S^{d-1}$.
Further, let $F_i$ be the open $\varepsilon_1$-neighborhood of $F_i'\setminus D$, also in $S^{d-1}$.
Choose $\varepsilon_2$ small enough such that none of $F_1, F_2,\dots, F_{d+1}$ and $D$
contain a pair of antipodal points.
Choose $\varepsilon_1$ small enough with
respect to $\varepsilon_2$, so that every point $x \in S^{d-1}$  has an open
neighborhood that is covered by at most $\left \lceil \frac{d}{2} \right \rceil$ of the sets $F_i$.
Then $\mathcal{F}:=\{F_1, F_2,\dots, F_{d+1}, D\}$  is an antipodal $1$-cover of $S^{d-1}$ with $d+2$ open sets. 
See Figure~\ref{fig:equator} for an illustration of the $d=2$ case.

We now extend $\mathcal{F}$ to an antipodal $\left (1,\left \lfloor \frac{d}{2}\right \rfloor + 1 \right )$-fold cover of $S^d$ with open sets.
Roughly speaking, we first extend all sets of $\mathcal{F}$ to parts of a ``belt''. Afterwards, we further 
extend the ``facet'' sets $F_i$ of $\mathcal{F}$ to cover the northern hemisphere 
and the set $D$ of $\mathcal{F}$ to cover the southern hemisphere. 
Let $\pi$ be the orthogonal projection of $\mathbb{R}^{d+1}$
to the hyperplane $x_{d+1}=0$. Let $ 0 < \delta_1' < \delta_2' < 1$.
Let $C_i''$ be the set of points $x \in \mathbb{R}^{d+1}$ with $\pi(x) \in F_i$, whose last coordinate
satisfies $-\delta_2' < x_{d+1} < \delta_2'$. Similarly, let $C_{d+2}''$ be the set of points 
$x \in \mathbb{R}^{d+1}$ with $\pi(x) \in D$, 
whose last coordinate  satisfies $-\delta_2' < x_{d+1} < \delta_1' $. 
Note that while the facet sets $F_i$ are extended symmetrically to the north and the south,
the set $D$ is extended further to the south than to the north.

Next, for each $1\leq i \leq d+2$, let $C_i'$ be the set of points $x \in S^d$ such that
the infinite ray with apex in the origin and passing through $x$ intersects $C_i''$. The
sets $C_i'$ the parts of ``belts'' on the sphere. Let $\delta_i = \sin\tan^{-1}(\delta_i')$ 
be the corresponding heights of the belt parts, for $i=1,2$. 

Finally, for $1 \le i \le d+1$, let $C_i$ be the union of the open
northern hemisphere with $C_i'$, minus the closure of $-C_i'$. Further, let 
$C_{d+2}$  be the union of the southern hemisphere with $C_{d+2}'$, minus the closure of $-C_{d+2}'$.
See Figures~\ref{fig:north} and~\ref{fig:south} for an illustration of
the northern and southern hemispheres respectively (for the case $d=2$ and $m=2$).
By construction, all these sets are open and do not contain a pair of antipodal points
of $S^d$.  What remains to show is that $\mathcal{C}:=\{C_1,\dots,C_{d+2}\}$ is in fact a 
$\left(1,\left \lfloor \frac{d}{2}\right \rfloor + 1 \right)$-cover
of $S^d$. 

We distinguish a few cases with respect to the value of the last coordinate of the points of $S^d$.
The set of points of $S^{d}$ whose
last coordinate is greater than $\delta_2$ are covered $d+1$ times by $C_1, \dots, C_{d+1}$.
The set of points of $S^{d}$ whose last coordinate is less than or equal to $-\delta_2$ are covered once by $C_{d+2}$.

To each point $x \in S^d$ whose last coordinate satisfies
$-\delta_2 < x_{d+1} \le \delta_2$, we assign the point $x'$ 
in the equator such that $\pi(x)$ lies on the infinite ray from the origin to $x'$.
Suppose that the last coordinate of $x$ is greater than zero. 
By our choice of $\varepsilon_1$ and $\varepsilon_2$, there is an open neighborhood
of $x'$ in $S^{d-1}$ that is covered by at most $\left \lceil \frac{d}{2} \right \rceil$ sets of $-F_1, \dots, -F_{d+1}$.
Assume without loss of generality that $-F_1, -F_2,\dots, -F_{\left \lfloor \frac{d}{2} \right \rfloor+1}$ do not cover this neighborhood, 
then at least $C_1, C_2,\dots, C_{\left \lfloor \frac{d}{2} \right \rfloor+1}$ cover $x$. Therefore $\mathcal{C}$ covers
the open northern hemisphere $\left \lfloor \frac{d}{2} \right \rfloor+1$ times.  If the last coordinate of $x$ is greater than $-\delta_2$
and at most zero, then $x$ is covered by the extensions of the sets in $\mathcal{F}$
that cover $x'$. Thus, $x$ is covered at least once and $\mathcal{C}$ is a $\left (1,\left \lfloor \frac{d}{2}\right \rfloor + 1 \right )$-cover of $S^d$.
\end{proof}

\begin{figure}
\centering
\begin{tabular}{ccc}
\subfloat{\includegraphics[width=0.25\textwidth]{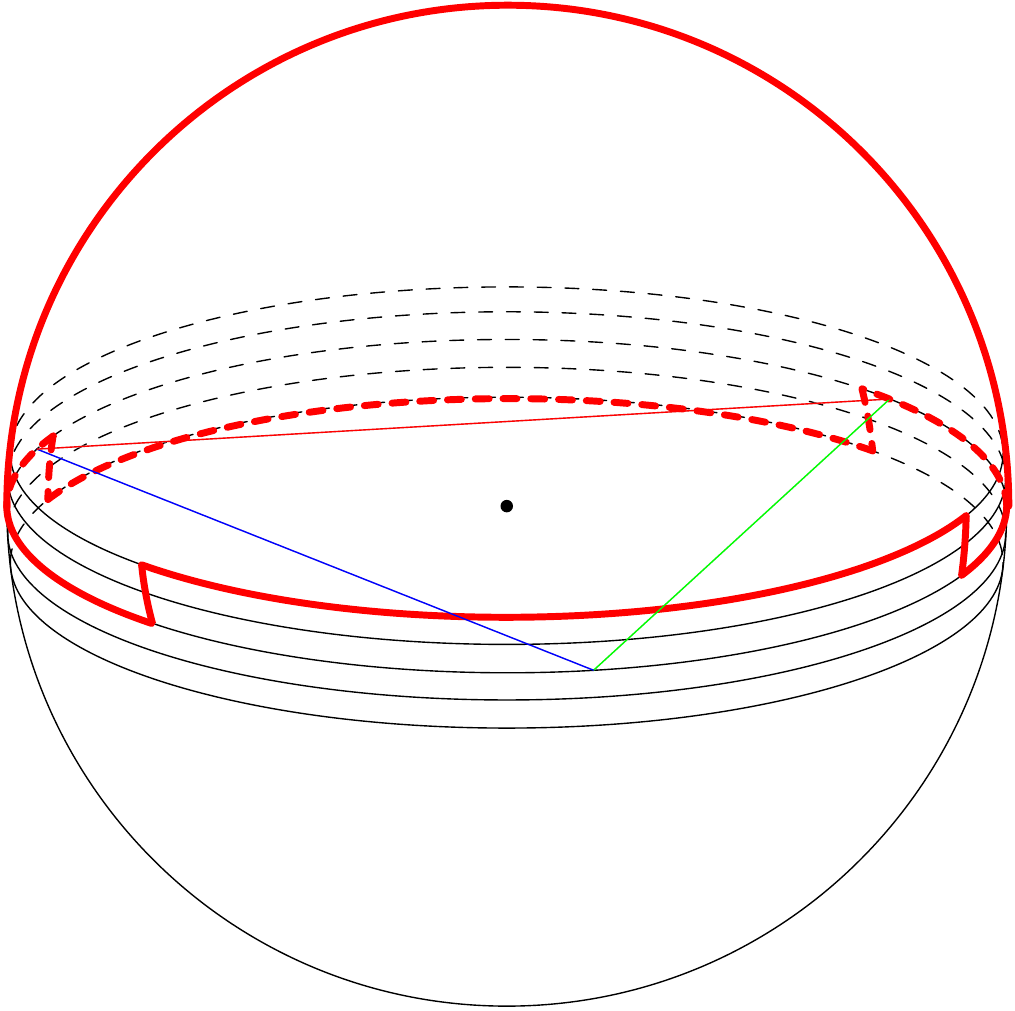}}
   & \subfloat{\includegraphics[width=0.25\textwidth]{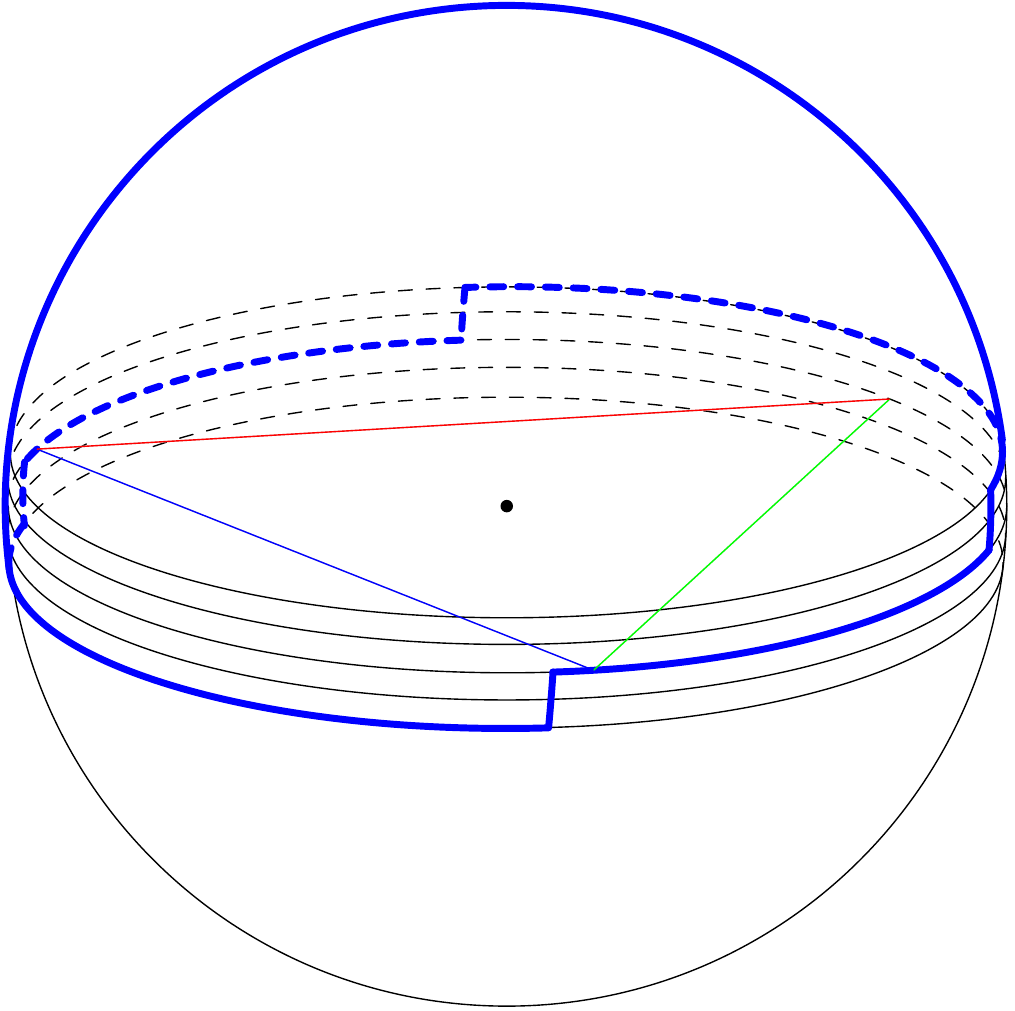}}
   & \subfloat{\includegraphics[width=0.25\textwidth]{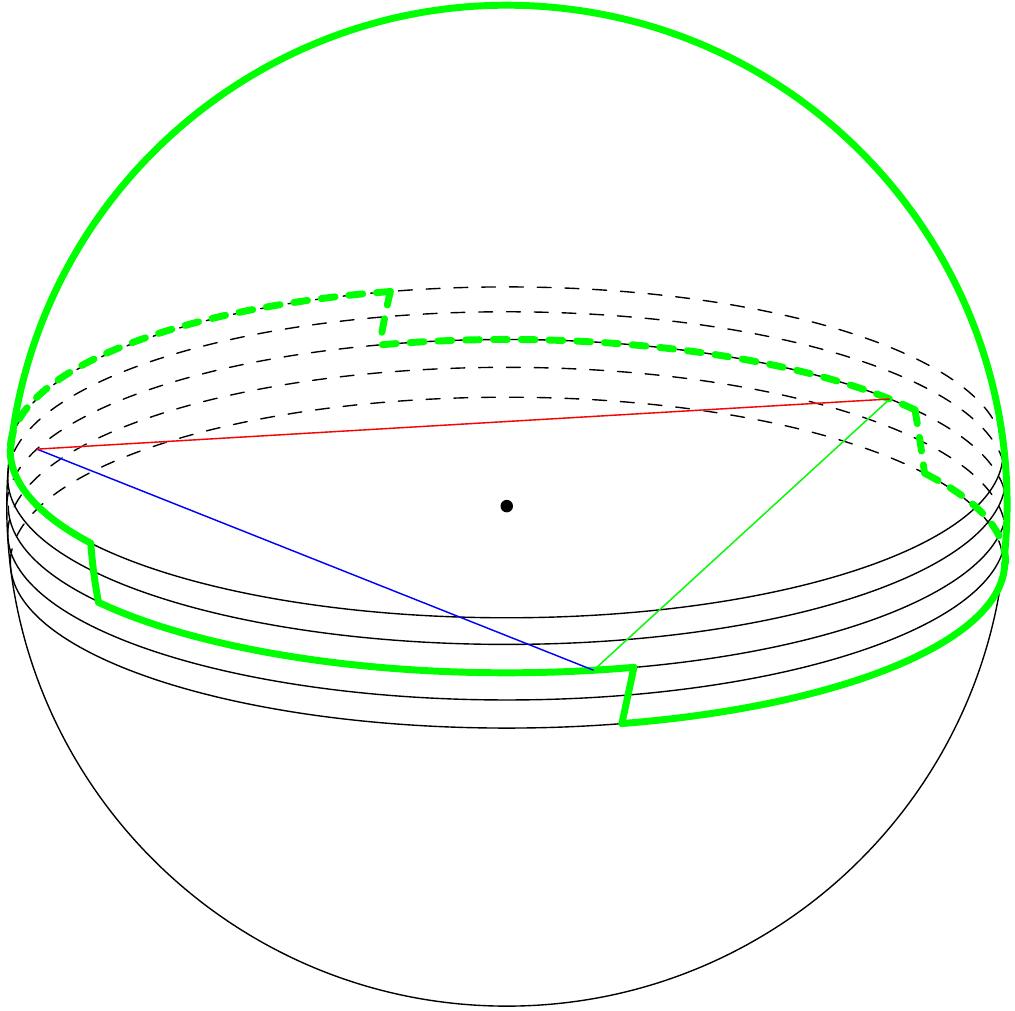}}\\
\subfloat{\includegraphics[width=0.25\textwidth]{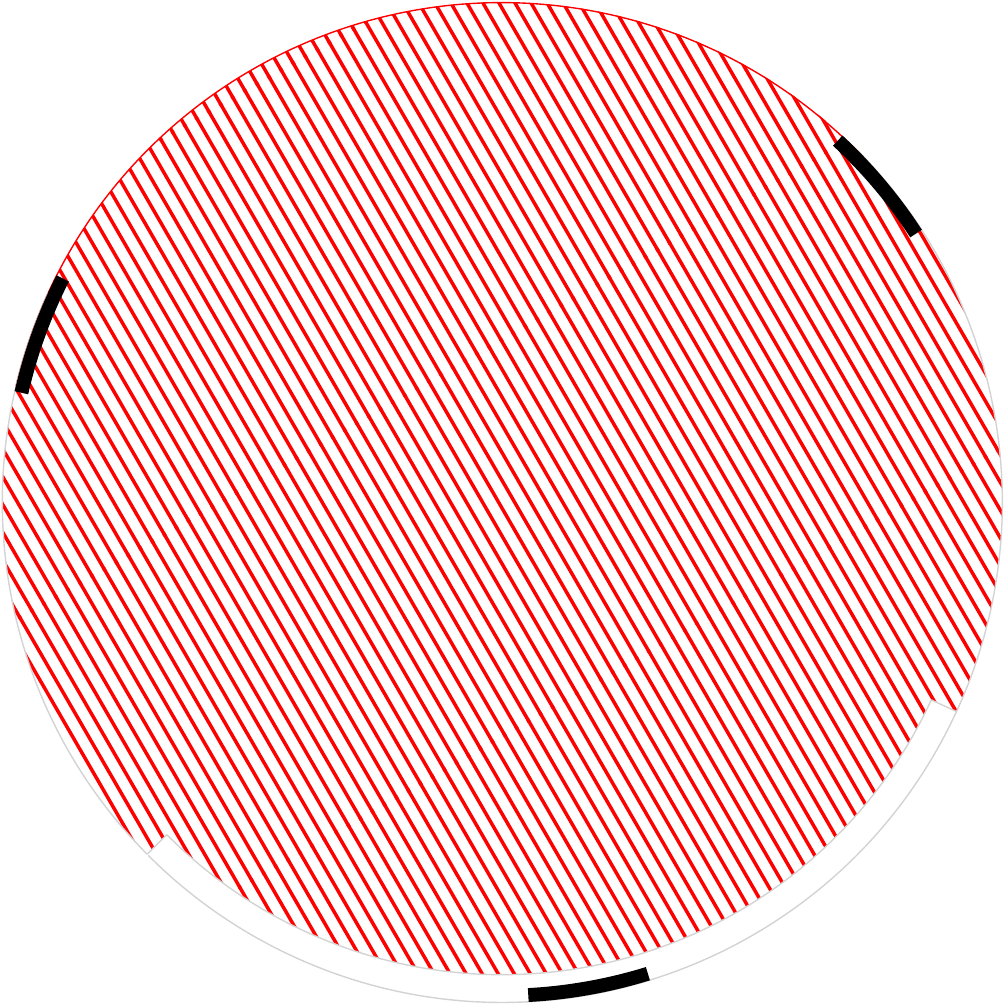}}
   & \subfloat{\includegraphics[width=0.25\textwidth]{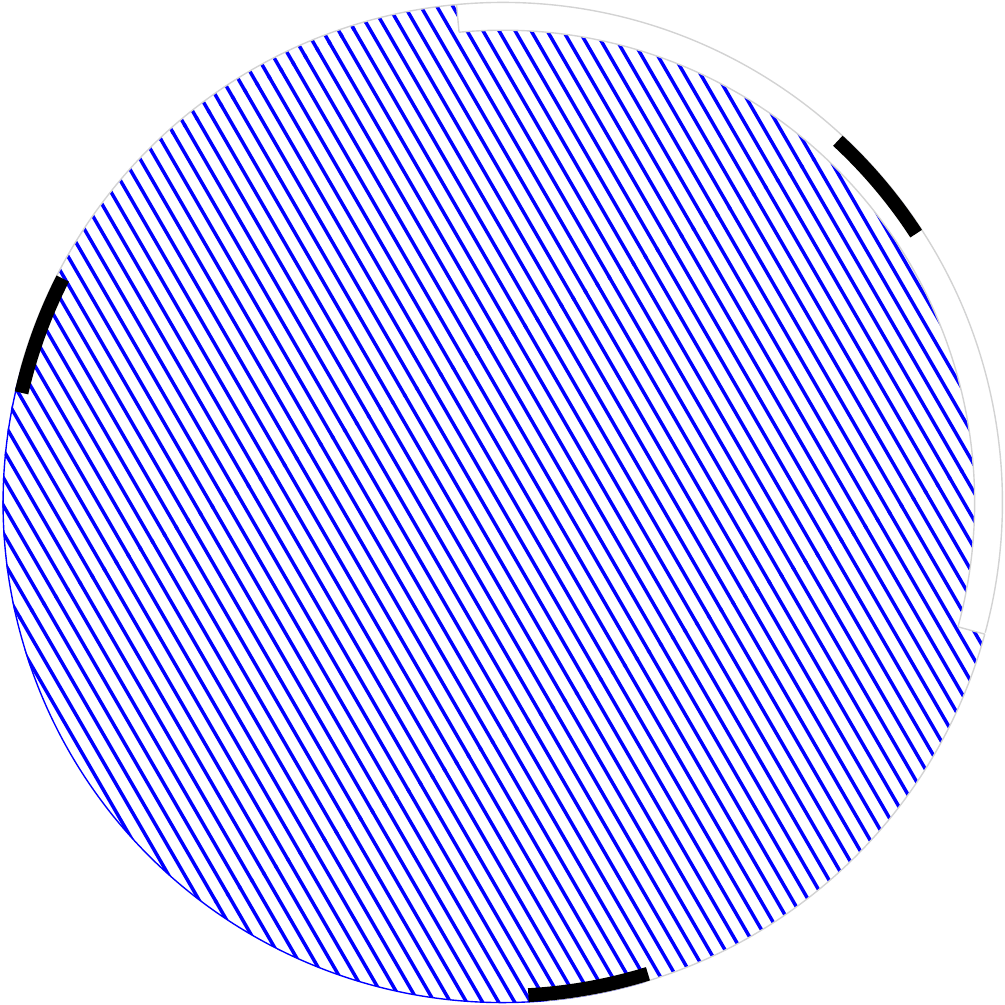}}
   & \subfloat{\includegraphics[width=0.25\textwidth]{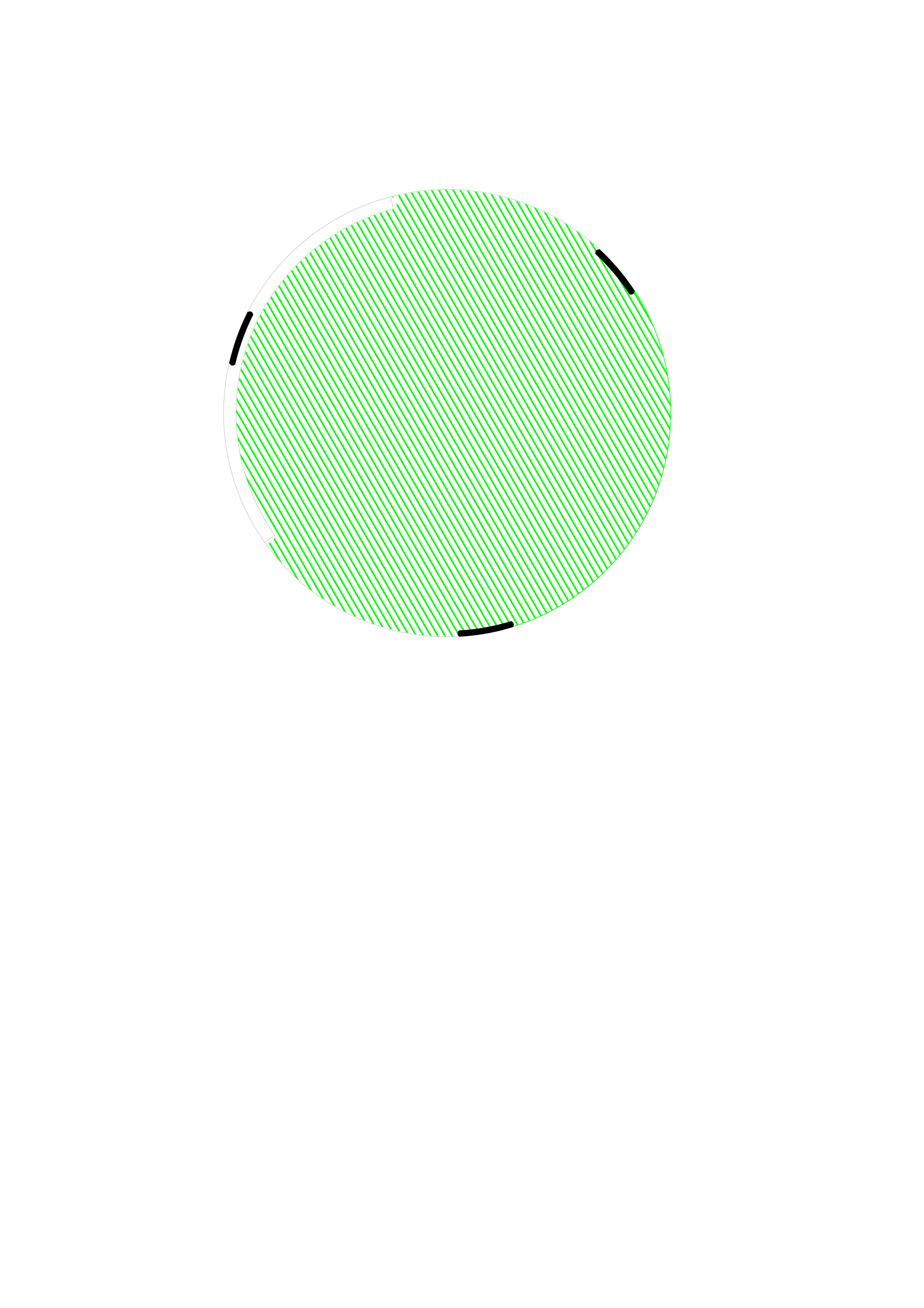}}
\end{tabular}
\caption{The covering of the northern hemisphere in the proof of Theorem~\ref{thm:f_1_m}}\label{fig:north}
\end{figure}

\begin{figure}
\centering
\begin{tabular}{cc}
\subfloat{\includegraphics[width=0.25\textwidth]{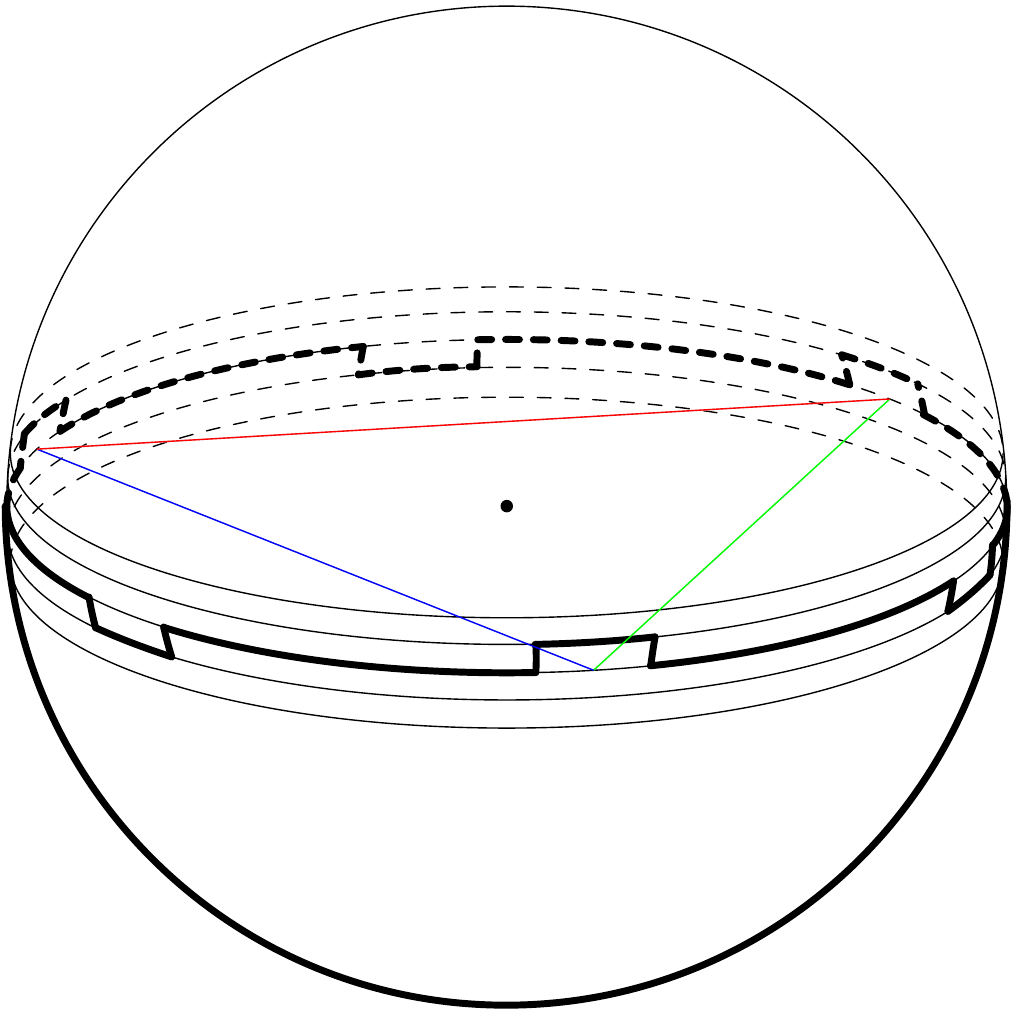}}
   & \subfloat{\includegraphics[width=0.25\textwidth]{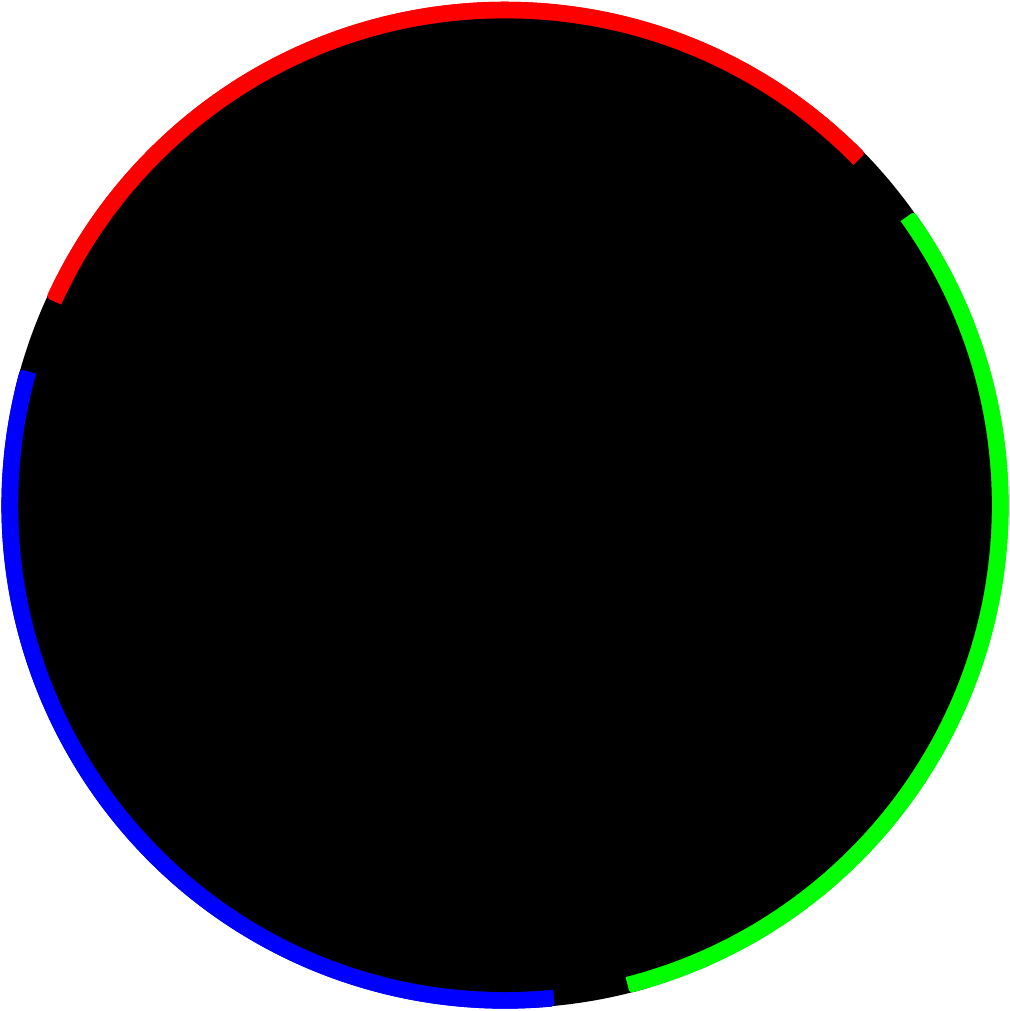}}
\end{tabular}
\caption{The covering of the southern hemisphere in the proof of Theorem~\ref{thm:f_1_m}}\label{fig:south}
\end{figure}

\begin{prop}\label{prop:f_1_1_2}
$f(1,1,m)=2+m$.
\end{prop}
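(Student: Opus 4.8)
The plan is to prove the two matching bounds separately. Throughout I identify $S^1$ with $\mathbb{R}/2\pi\mathbb{Z}$ via $\theta\mapsto(\cos\theta,\sin\theta)$, so that the open northern hemisphere is the arc $(0,\pi)$, the open south is $(\pi,2\pi)$, the equator consists of the two antipodal points $e_+=0$ and $e_-=\pi$, and the antipodal map is $\theta\mapsto\theta+\pi$. A useful remark is that any subset of the open arc $(0,\pi)$ is automatically antipodal-free, since the antipode of $(0,\pi)$ is $(\pi,2\pi)$. For the lower bound I would argue by contradiction that an antipodal $(1,m)$-fold cover $\mathcal F$ cannot have only $m+1$ sets; that $|\mathcal F|\ge m+1$ is immediate, since a northern point needs $m$ sets and its southern antipode a further one. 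So suppose $|\mathcal F|=m+1$. For each $\phi\in(0,\pi)$ the point $p_\phi$ lies in at least $m$ sets, hence at most one set avoids it, while its antipode $p_{\phi+\pi}$ lies in at least one set, which must avoid $p_\phi$; thus exactly one set $g(\phi)$ avoids $p_\phi$, and $g(\phi)$ is the unique set covering $p_{\phi+\pi}$. The key step is that $g$ is locally constant: near any $\phi_0$ every set other than $g(\phi_0)$ is open and contains $p_{\phi_0}$, hence contains $p_\phi$ for nearby $\phi$, forcing $g(\phi)=g(\phi_0)$. As $(0,\pi)$ is connected, $g\equiv F_0$ is constant.

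To finish the lower bound, observe that every set $F\ne F_0$ then contains all of $(0,\pi)$; being antipodal-free it is disjoint from $(\pi,2\pi)$, so it is an open subset of the closed arc $[0,\pi]$ and therefore lies in its interior $(0,\pi)$. Hence no such $F$ contains $e_+$ or $e_-$, so the two equator points can be covered only by $F_0$. But $e_+$ and $e_-$ are antipodal and $F_0$ is antipodal-free, so at least one of them is left uncovered, contradicting that $\mathcal F$ is a cover. This gives $f(1,1,m)\ge m+2$.

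For the upper bound I would exhibit an explicit cover with $m+2$ open sets. Fix $c<\tfrac{\pi}{2}$ and take $m$ \emph{punctured hemispheres} $A_j=(0,\pi)\setminus\{y_j\}$ for $j=1,\dots,m$, where the distinct points $y_j$ all lie in $(0,c)\cup(\pi-c,\pi)$, i.e.\ away from the middle; each $A_j$ is an open antipodal-free subset of $(0,\pi)$. Add the two equator-straddling arcs $P=(\tfrac{3\pi}{2}-\eta,\,2\pi+c)$ and $Q=(\pi-c,\,\tfrac{3\pi}{2}+\eta)$ with $c+\eta<\tfrac{\pi}{2}$; each has length below $\pi$ and is therefore antipodal-free. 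A direct check shows that $P\cup Q$ covers the whole open south together with both equator points, while on the north $P$ covers exactly $(0,c)$ and $Q$ exactly $(\pi-c,\pi)$. The sets $A_j$ give northern multiplicity $m$ except at the punctures, where it drops to $m-1$; but each $y_j$ lies in $(0,c)$ or $(\pi-c,\pi)$ and is thus covered by $P$ or $Q$, restoring multiplicity $m$, whereas the middle arc $(c,\pi-c)$ contains no puncture and is already covered $m$ times by the $A_j$ alone. Hence these $m+2$ distinct open sets form an antipodal $(1,m)$-fold cover and $f(1,1,m)\le m+2$.

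I expect the main obstacle to be the upper bound. A single set covers the entire open north only if it equals $(0,\pi)$, so the northern multiplicity cannot be built from $m$ full layers, and antipodality forbids a set covering a northern point from covering its southern antipode; a naive split would cost $m+1$ sets for the north plus $2$ for the closed south. The construction circumvents this by letting $P$ and $Q$ do double duty, simultaneously covering the whole southern hemisphere together with the equator and supplying the one missing unit of northern multiplicity exactly where the punctures remove it. Placing every $y_j$ inside the northern parts $(0,c)\cup(\pi-c,\pi)$ of $P$ and $Q$ is precisely what makes $m+2$ sets suffice rather than $m+3$.
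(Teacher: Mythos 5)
Your proposal is correct, and it splits from the paper in an interesting way. On the lower bound you are essentially doing what the paper does, just packaged more rigorously: the paper argues that either two points of the open northern hemisphere are covered by different collections of $m$ sets (in which case a point on the arc between them is covered $m+1$ times, and its antipode forces an $(m+2)$-nd set), or else every northern point is covered by the same $m$ sets, which then cannot contain either equator point, and the two antipodal equator points require two further sets. Your locally constant function $g$ is exactly this dichotomy in disguise (constancy of $g$ is the paper's second case; a jump in $g$ would be its first), so the lower bounds are the same approach. The upper bound is where you genuinely diverge. The paper's construction is much shorter: take any antipodal $1$-fold cover of $S^1$ by three open arcs and add $m-1$ copies of the open northern hemisphere. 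But those $m-1$ added sets are all \emph{equal} as sets, so this construction only makes sense if a cover is read as a multiset (an indexed family) in which repetitions count toward multiplicity. Your construction --- $m$ pairwise distinct punctured hemispheres $A_j$, with the two equator-straddling arcs $P$ and $Q$ doing double duty by covering the south and the equator while repairing the multiplicity lost at the punctures --- costs more verification but yields $m+2$ pairwise \emph{distinct} open antipodal-free sets, so it establishes $f(1,1,m)\le m+2$ under either reading of what a cover is. In short: same lower bound, a more robust (if longer) upper bound.
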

\begin{proof}
For the upper bound, note that an antipodal $(1,m)$-cover of $S^1$
with $2+m$ open sets can be obtained by adding $m-1$ open northern hemispheres
to an antipodal $1$-cover of $S^1$ with three open sets.
For the lower bound,  
suppose first that there exist two points $x$ and $y$ on the open upper hemisphere that are not covered by the same $m$ sets.
Then on the path from $x$ to $y$ on the northern hemisphere, there exists a point $z$ that is covered by at least $m+1$ different sets.
As its antipodal point $-z$ needs at least one set to be covered as well, this gives a total of at least $m+2$ sets.
Hence, assume that all points on the open northern hemisphere are covered by the same $m$ sets. But then none of the two
points on the equator can be covered by any of these sets, again resulting in a total of at least $m+2$ sets.
\end{proof}

In the remaining part of this section, we present lower and upper bounds for $f(d,n,m)$ for arbitrary values of $n$ and $m$.

\begin{theorem}\label{thm:f}
$\left \lceil \frac{d-1}{2}\right \rceil+n+m \le f(d,n,m) \le d+n+m$.
\end{theorem}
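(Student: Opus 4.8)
The plan is to prove the two inequalities separately, reusing the machinery already developed for $Q(d,n)$ and in the special cases. For the lower bound $\lceil\frac{d-1}{2}\rceil+n+m \le f(d,n,m)$, I would run the equator argument of Theorem~\ref{thm:f_1_m}, but feed in the general estimate $Q(d-1,n)\ge\lceil\frac{d-1}{2}\rceil+n$ from Theorem~\ref{thm:Q} in place of the exact value $Q(d-1,1)$. Concretely, let $\mathcal{F}$ be an antipodal $(n,m)$-fold cover of $S^d$ with open sets. Intersecting each set of $\mathcal{F}$ with the equator $S^{d-1}$ produces an antipodal $n$-fold cover of $S^{d-1}$ by sets open in $S^{d-1}$, so Theorem~\ref{thm:Q} furnishes a point $x$ of the equator lying in at least $\lceil\frac{d-1}{2}\rceil+n$ sets of $\mathcal{F}$. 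Choosing finitely many such sets and using that they are open in $S^d$ and contain $x$, a point $p$ taken just below $x$ in the open southern hemisphere still lies in all of them. Its antipode $-p$ lies in the open northern hemisphere, hence is covered at least $m$ times. The family of sets covering $p$ and the family covering $-p$ are disjoint, since $\mathcal{F}$ is antipodal and no single set can contain both $p$ and $-p$; summing the two counts yields $|\mathcal{F}|\ge\lceil\frac{d-1}{2}\rceil+n+m$.

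For the upper bound $f(d,n,m)\le d+n+m$ I would give an explicit construction, in the same spirit as the constructions in Theorem~\ref{thm:f_1_m} and Proposition~\ref{prop:f_1_1_2}. Begin with Gale's $n$-fold cover of $S^d$: this is a family of $d+2n$ open hemispheres, containing no antipodal pair, in which every point of $S^d$ is covered at least $n$ times. To this family append $m-n$ copies of the open northern hemisphere $\{x\in S^d : x_{d+1}>0\}$. Each appended set is open and antipodal-free; every point with $x_{d+1}\le 0$ retains its coverage of at least $n$ from the Gale part, while every point with $x_{d+1}>0$ gains an additional $m-n$, for a total of at least $m$. The resulting family is therefore an antipodal $(n,m)$-fold cover of $S^d$ with $(d+2n)+(m-n)=d+n+m$ open sets, as required.

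I expect essentially all of the difficulty to sit in the lower bound, and specifically in the passage from the equatorial point $x$ to the nearby southern point $p$. The point to verify carefully is that openness in $S^d$ (and not merely in the equatorial $S^{d-1}$ to which Theorem~\ref{thm:Q} was applied) guarantees that all $\lceil\frac{d-1}{2}\rceil+n$ sets through $x$ survive at $p$, and that antipodality of $\mathcal{F}$ genuinely forces the family covering $p$ and the family covering $-p$ to be disjoint so that the two counts add. By contrast the upper bound is routine, once one allows the open northern hemisphere to be repeated with multiplicity, exactly as in the earlier constructions. Note finally that the two bounds are not claimed to coincide, so no matching argument is needed; the goal is only to establish the stated inequalities.
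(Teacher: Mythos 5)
Your proposal is correct and follows essentially the same route as the paper's proof: the lower bound via restricting the cover to the equator, applying Theorem~\ref{thm:Q}, perturbing to a southern point and adding the $m$ sets through its northern antipode, and the upper bound via Gale's $n$-fold cover augmented by $m-n$ open northern hemispheres. Your write-up even makes explicit two small points the paper leaves implicit (openness guaranteeing the sets survive at the perturbed point, and antipodality forcing the two families to be disjoint), so nothing is missing.
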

\begin{proof}
Let $\mathcal{F}$ be an antipodal $(n,m)$-fold cover of $S^d$
with open sets. Since $\mathcal{F}$ covers the equator once, by Theorem~\ref{thm:Q}
there is a point in the equator of $S^d$ covered at least
$\left \lceil \frac{d-1}{2}\right \rceil+n$ times. Just below this point
there is a point in the southern hemisphere covered by the same sets; its 
antipodal point in the northern hemisphere 
is covered by at least $m$ other sets. Thus in total there
are at least $\left \lceil \frac{d-1}{2}\right \rceil+n+m$ sets
in $\mathcal{F}$. For the upper bound take Gale's $n$-fold cover of $S^d$
with $d+2n$ sets. Add $m-n$ open northern hemispheres to obtain an antipodal $(n,m)$-fold cover of $S^d$
with $d+n+m$ open sets. 
\end{proof}

In the case where $m-n < \left \lceil \frac{d}{2} \right \rceil$, the following proposition 
gives an improvement of the lower bound from Theorem~\ref{thm:f}. We omit the proof, which is a direct application 
of Stahl's result in combination with the fact that every $(n,m)$-fold cover is also an $n$-fold cover.

\begin{prop}
$f(d,n,m) \ge d+2n$.
\end{prop}

In the proof of the lower bound of $f(d,n,m)$ in Theorem~\ref{thm:f}  we used the lower bound of $Q(d,n)$ given in Theorem~\ref{thm:Q}. 
Hence, any improvement on the lower bound
of $Q(d,n)$ immediately improves the lower bound of
 $f(d,n,m)$. Assuming that Conjecture~\ref{con:Q} holds, the proof of Theorem~\ref{thm:f}  
would leave a gap of only one between the lower and upper bounds of $f(d,n,m)$.
  
\section*{Acknowledgments.}
We are indebted to L\'aszl\'o Z\'adori \cite{za} who asked whether
$f(2,1,2)$ equals four or five, a question that started this
piece of research.

\end{document}